\theoremstyle{plain}
\newtheorem{theorem}{Theorem}
\newtheorem{lemma}[theorem]{Lemma}
\newtheorem{corollary}[theorem]{Corollary}
\newtheorem{fact}[theorem]{Fact}
\newtheorem{conjecture}[theorem]{Conjecture}
\theoremstyle{definition}
\newcommand{\cE}{\mathcal{E}}
\newcommand{\cL}{\mathcal{L}}
\newcommand{\cV}{\mathcal{V}}
\newcommand{\eps}{\varepsilon}
\begin{document}

\title{The Brown-Erd\H os-S\'os Conjecture in finite abelian groups}

\author{J\'{o}zsef Solymosi}
\address{\noindent Department of Mathematics, University of British Columbia, Vancouver, BC, Canada V6T 1Z2}
\email{solymosi@math.ubc.ca}

\author{Ching Wong}
\address{\noindent Department of Mathematics, University of British Columbia, Vancouver, BC, Canada V6T 1Z2}
\email{ching@math.ubc.ca}

\date{}

\begin{abstract}
	The Brown-Erd\H{o}s-S\'{o}s conjecture, one of the central conjectures in extremal combinatorics, states that for any integer $m\geq 6,$ if a 3-uniform hypergraph on $n$ vertices contains no $m$ vertices spanning at least $m-3$ edges, then the number of edges is $o(n^2).$ We prove the conjecture for triple systems coming from finite abelian groups.
	\end{abstract}

\maketitle

\section{Introduction}

In extremal graph theory, a fundamental problem is to determine the maximum number of edges in a graph on $n$ vertices such that the graph does not contain certain subgraphs. In 1973, Brown, Erd\H{o}s and S\'{o}s \cite{BrownErdosSos73}  studied the problem in $3$-uniform hypergraphs on $n$ vertices, forbidding all sub-hypergraphs with $m$ vertices and $k$ hyperedges. The maximum number of hyperedges in such hypergraphs is denoted by $f(n,m,k)$. They determined the asymptotic behaviour of $f(n,m,k)$ for most $k$ when $m\leq6$. Moreover, the bound $f(n,m,m-2)=\Theta(n^2)$ is established for any fixed $m \geq 4$ using randomness. They conjectured that $f(n,m,m-3)=o(n^2)$ for any fixed $m\geq6$, which is now known as the Brown-Erd\H{o}s-S\'{o}s conjecture.

Ruzsa and Szemer\'{e}di \cite{RuzsaSzemeredi1978} later resolved the $(6,3)$-problem by proving that $f(n,6,3) = o(n^2)$. Whether it is true that $f(n,7,4)=o(n^2)$ is still an open problem. The first author provided a partial answer to the $(7,4)$-problem in \cite{Solymosi15}. It was observed that the Brown-Erd\H{o}s-S\'{o}s conjecture is equivalent to the following:

\begin{conjecture}[Brown-Erd\H{o}s-S\'{o}s]\label{conj:BES}
	Fix $m \geq 6$. For every $c>0$, there exists a threshold $N = N(c)$ such that if $A$ is a quasigroup with $|A| = n \geq N$, then for every set $S$ of triples of the form $(a,b,ab) \in A^3$ with $|S| \geq cn^2$, there exists a subset of $m$ elements of $A$ which spans at least $m-3$ triples of $S$.
\end{conjecture}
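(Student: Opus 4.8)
The plan is to work entirely within the quasigroup reformulation, reading the multiplication table of $A$ as an $n\times n$ Latin square. The set $S$ then becomes a collection of $cn^{2}$ filled cells, and the associated triple system is the tripartite $3$-uniform hypergraph $H$ on vertex classes $R$ (rows), $C$ (columns) and $E$ (symbols), with an edge $\{r,c,s\}$ whenever the cell $(r,c)$ carries symbol $s$ and lies in $S$. This $H$ is automatically \emph{linear}: the defining property of a quasigroup forces any two of the three coordinates to determine the third, so two distinct edges meet in at most one vertex. In this language a set of $m$ vertices spanning $m-3$ edges is a choice of $p$ rows, $q$ columns and $t$ symbols with $p+q+t=m$, together with at least $p+q+t-3$ cells of $S$ lying in the $p\times q$ subgrid and using only the $t$ chosen symbols. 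The case $m=6$, $p=q=t=2$ asks for three cells of $S$ forming a loose triangle inside a $2\times 2$ subgrid restricted to two symbols; this is exactly the configuration excluded by the Ruzsa--Szemer\'edi $(6,3)$-theorem via triangle removal, so the base case is already settled and the task is to reach all larger $m$.

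First I would exploit the fact that a dense $S$ cannot avoid the $m=6$ corner: since $f(n,6,3)=o(n^{2})$, a supersaturated form of the Ruzsa--Szemer\'edi theorem already yields many such corners. The obstacle is that the target $(m,m-3)$-configuration is nothing like a disjoint union of corners. With $m-3$ edges on $m$ vertices its edge-to-vertex ratio tends to $1$, so by linearity it must be a tightly glued, highly cyclic cluster: a linear hypertree on $m-3$ edges would occupy $2m-5$ vertices, and compressing this down to $m$ vertices requires about $m-5$ cycle-closing attachments. The natural strategy is thus amalgamation: start from one corner and repeatedly attach a fresh cell of $S$ that reuses two already-chosen lines while contributing exactly one new vertex, each step raising the edge count and the vertex count by one so that the surplus $v-k=3$ is preserved. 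The quasigroup property disciplines this search, because fixing the two reused lines forces the two shared coordinates of the new cell and hence constrains the third. What is needed to make the amalgamation run is a counting lemma guaranteeing that, for a positive density of partial clusters, such an attachment genuinely lands in $S$.

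The hard part will be supplying that counting lemma in the absence of additive structure. For $A$ abelian the relation $ab=c$ is the single linear equation $a+b=c$, whose solution patterns inside a dense set are controlled by Fourier analysis and Varnavides-type averaging built on the arithmetic removal lemma and Szemer\'edi's theorem; this is precisely what powers the abelian case. A general quasigroup offers none of this --- its only global features are linearity and the perfect-matching (Latin) structure of each pair of coordinate classes, with no transform diagonalising the multiplication --- so the attachment step has no a priori density guarantee. I therefore expect the only viable route to be a structural dichotomy: show that $A$ is either \emph{group-like}, in the sense that its table agrees with the Cayley table of an abelian group on all but a small fraction of cells or up to isotopy, in which case the additive machinery and the amalgamation apply; or \emph{quasirandom}, in which case the bipartite projections of $S$ behave pseudorandomly and the hypergraph removal lemma alone produces the cluster. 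Making such a dichotomy exhaustive --- excluding quasigroups that are neither structured enough for arithmetic tools nor pseudorandom enough for removal --- is the genuine obstruction, and it is the same one that keeps the general Brown--Erd\H{o}s--S\'os conjecture open, since the quasigroup formulation stated above is equivalent to it.
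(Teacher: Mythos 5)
Your proposal does not prove the statement, and you say as much in your final sentence; what you have written is a research plan whose central ingredient is missing, and the missing ingredient is not a technical lacuna but the entire problem. Note first that the statement appears in the paper as Conjecture \ref{conj:BES} precisely because it is open: the paper offers no proof of it for quasigroups, only of the abelian-group case (Theorem \ref{thm:mu}, extended to general finite groups via Pyber's theorem), which by the monotonicity $f(n,m,k)\leq f(n,m,k+1)$ settles the conjecture for infinitely many $m$ in that restricted setting. Your amalgamation scheme fails at its very first step: attaching one new cell to a $(6,3)$-corner while preserving $v-k=3$ produces a $(7,4)$-configuration, and the paper states explicitly that $f(n,7,4)=o(n^2)$ is still open in general --- so the counting lemma you defer to is, already in its first instance, the open $(7,4)$-problem for quasigroups. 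The structured/quasirandom dichotomy you then invoke does not exist in any usable form: a Latin square can fail quasirandomness without being close (in edit distance or up to isotopy) to any group table, and making the dichotomy exhaustive is, as you yourself observe, equivalent in difficulty to the conjecture. A correct diagnosis of why a problem is open is not a proof of it.

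Two smaller corrections to your account of the abelian case, since you lean on it as the model. First, the machinery that powers the group case is not Fourier analysis, Varnavides averaging, or Szemer\'edi's theorem on progressions: the algebraic input is the single identity $ad = ab(cb)^{-1}cd$ (Fact \ref{fact:q>=1}), valid in abelian groups but not in quasigroups, which makes the $S$-good quadruples through a fixed triple $\vec x$ rigid (Fact \ref{fact:a=a'}) and hence countable; the combinatorial engine is the supersaturation of $(7,4)$-configurations (Theorem \ref{thm:Solymosi15c}) iterated $t-1$ times through Szemer\'edi's regularity lemma, at each level pinning a popular triple $\vec y^{(i)}$ and passing to a dense regular pair. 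Second, the paper's conclusion in the group setting is much stronger than the $m-3$ edges you aim for --- asymptotically $\frac{4\nu}{3}$ edges on $\nu$ vertices --- exactly because the group structure forces this local density; the paper remarks that no such density is available for general triple systems, by the Glock--K\"uhn--Lo--Osthus and Bohman--Warnke constructions of high-girth Steiner systems. Your Latin-square reformulation, the linearity observation, and the bookkeeping $3+2(k-1)=2k+1$ for linear hypertrees are all correct, but the proposal must be rejected as a proof: it reduces an open problem to two unproved statements, each at least as hard as the original.
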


When $m=7$, the first open case, Solymosi proved the validity of the above conjecture for finite groups $A$. In fact, the following quantitative version is proved, for finite abelian groups $A$. An \emph{$(m,k)$-configuration} is a set of $k$ triples $(a,b,ab)\in A^3$ on $m$ elements of $A$.

\begin{theorem}[Solymosi \cite{Solymosi15}]
	\label{thm:Solymosi15b}
	For every $c>0$, there exists a constant $\mu_1 = \mu_1(c) > 0$ such that if $A$ is a finite abelian group with $|A| = n$, then for every set $S$ of triples of the form $(a,b,ab) \in A^3$ with $|S| \geq cn^2$, 
	$S$ contains at least $\lfloor \mu_1 n^3 \rfloor$ $(7,4)$-configurations.
\end{theorem}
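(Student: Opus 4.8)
\section*{Proof proposal for Theorem~\ref{thm:Solymosi15b}}

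The plan is to work additively and to isolate one especially rigid $(7,4)$-pattern that is \emph{forced} to appear in abundance. Write $A$ additively and identify the set $S$ of triples $(a,b,a+b)$ with a set $S \subseteq A \times A$ of pairs, so that $|S| \ge cn^2$ means $S$ has density at least $c$ in the abelian group $A \times A$ of order $n^2$. The target pattern is a \emph{combinatorial square}: a quadruple $(x,y),(x+d,y),(x,y+d),(x+d,y+d) \in S$ with $d \neq 0$. First I would check that such a square is a genuine $(7,4)$-configuration: the four triples have ``left'' entries $\{x,x+d\}$, ``right'' entries $\{y,y+d\}$, and sums $x+y,\ x+y+d,\ x+y+d,\ x+y+2d$. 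The point is that commutativity forces $(x+d)+y = x+(y+d) = x+y+d$, so the middle two sums automatically coincide; the seven distinct elements $x,\,x+d,\,y,\,y+d,\,x+y,\,x+y+d,\,x+y+2d$ then span exactly the four triples. Thus every nondegenerate square yields a $(7,4)$-configuration, and it suffices to produce $\gtrsim n^3$ of them.

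The heart of the argument is therefore to lower-bound the number of squares in $S$. Here one must be careful about what is and is not elementary. A routine Cauchy--Schwarz (convexity applied to the codegrees of left vertices in the bipartite graph encoding $S$) shows that the number of \emph{rectangles} $(x,y),(x+d_1,y),(x,y+d_2),(x+d_1,y+d_2)\in S$, with the two shifts $d_1,d_2$ independent, is at least $c^4 n^4$. Squares are precisely the diagonal slice $d_1=d_2$ of this count, and that coupling is exactly the difficulty: an arbitrary dense subset of $A\times A$ can distribute its rectangles so that the diagonal slice is tiny. What rescues us is that $A\times A$ is itself a finite abelian group and the square is a translation-invariant pattern, so I would invoke the (supersaturated) multidimensional Szemer\'edi theorem of Furstenberg--Katznelson: a set of density $c$ in $A\times A$ contains at least $\delta(c)\,n^3$ squares with $d\neq 0$, for some $\delta(c)>0$. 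Subtracting the $O(n^2)$ degenerate parameters $(x,y,d)$ for which two of the seven elements collide, and dividing by the $O(1)$ overcounting of each configuration, leaves at least $\mu_1 n^3$ distinct $(7,4)$-configurations, with $\mu_1=\mu_1(c)>0$.

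I expect the main obstacle to be exactly the passage from all rectangles to the diagonal (equal-shift) slice: this is where plain convexity is powerless and the additive/ergodic content of Szemer\'edi-type theorems is genuinely needed. A secondary but real point is the $2$-torsion degeneracy: when $2d=0$ the element $x+y+2d$ collapses onto $x+y$ and the square spans only six vertices, which is fatal in groups such as $A \cong \mathbb{Z}_2^k$. I would handle this by a case split, restricting the supersaturation count to shifts $d$ with $2d\neq 0$ when $A[2]$ is a proper subgroup of bounded index, and treating groups of bounded exponent-$2$ structure with a separate seven-vertex pattern built from three distinct shifts. Finally, since the theorem only asserts positivity of $\mu_1(c)$, the tower-type (non-explicit) dependence of $\delta(c)$ coming from the Szemer\'edi input is acceptable; no quantitative control on $\mu_1$ is required.
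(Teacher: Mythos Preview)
This theorem is not proved in the present paper: it is quoted from \cite{Solymosi15} and used as a black box (in the sharper form of Theorem~\ref{thm:Solymosi15c} on $S$-good quadruples). So there is no in-paper argument to match against, but your route is genuinely different from Solymosi's and the comparison is worth making. Solymosi counts \emph{$S$-good quadruples}: rectangles $(a,b),(a,d),(c,b),(c,d)\in S$ with the anti-diagonal constraint $a+b=c+d$. That constraint is exactly what keeps the problem at the level of graph regularity: once two triples share the same product $ab=cd$, the remaining two edges $(a,d),(c,b)$ are forced to lie between the same regular pair, and a direct density count in a single $\eps$-regular pair produces $\gtrsim n^3$ such quadruples. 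Your pattern instead fixes the \emph{diagonal} slice $d_1=d_2$ of the rectangle count, the $2\times2$ grid $(x,y),(x+d,y),(x,y+d),(x+d,y+d)$, and you correctly identify that Cauchy--Schwarz alone cannot isolate this slice. Invoking Furstenberg--Katznelson here is a much heavier hammer than Solymosi needs: his configuration is handled by ordinary Szemer\'edi regularity, while your square pattern genuinely requires a multidimensional Szemer\'edi theorem. Note also that Furstenberg--Katznelson is stated for $\mathbb{Z}^d$; to get supersaturation of squares uniformly over \emph{all} finite abelian $A$ you would have to go through the hypergraph-removal proofs (Gowers; R\"odl--Nagle--Schacht--Skokan; Tao) or a suitable group-theoretic version, and this should be said explicitly rather than waved at.

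Your treatment of $2$-torsion is where the proposal has an actual gap. In $A\cong\mathbb{Z}_2^k$ every $d$ satisfies $2d=0$, so $x+y+2d=x+y$ and your seven vertices always collapse to six; the proposed fix --- ``a separate seven-vertex pattern built from three distinct shifts'' for groups ``of bounded exponent-$2$ structure'' --- is not an argument, because in that case $A[2]=A$ and there is no nontrivial case split available. (For what it is worth, the anti-diagonal pattern used in \cite{Solymosi15} degenerates in exactly the same way in exponent~$2$, since then $a+d=c+b$; so whether this is truly fatal depends on whether $(7,4)$ is read as ``exactly seven'' or ``at most seven'' vertices. Your write-up treats it as fatal and then leaves it unresolved.) You must either argue that a $(6,4)$-configuration already suffices for the intended application, or actually exhibit the promised alternative configuration and show it survives in elementary abelian $2$-groups.
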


In this article we extend the above theorem in a much stronger form. We prove that every dense subset of triples $(a,b,ab)$ there are vertices spanning many edges. Asymptotically, there are $m$ vertices spanning at least $4/3m$ edges, which is much stronger than the conjectured amount. In order to prove this one should strongly use the group structure, since similar result is not true for general triple systems. In a recent breakthrough result Glock, K\"uhn, Lo, and Osthus \cite{GKLO}, and independently Bohman and Warnke \cite{BW}, proved a related conjecture of Erd\H os asymptotically. They proved that there are almost complete Steiner triple systems without containing $m$ points with $m-2$ triples. (Triple systems with this sparseness property are also referred to as having high girth.) Both works use randomness, an advanced extension of the triangle removal process. In the next theorem we show that structure enforces high local density.

\begin{theorem}
	\label{thm:mu}
	For every $c>0$ and integer $t \geq 2$, there exists a constant $\mu_t = \mu_t(c) > 0$ such that if $A$ is a finite abelian group with $|A| = n$, then for every subset $S_0$ of triples of the form $(a,b,ab) \in A^3$ with $|S_0| \geq cn^2$, 
	$S_0$ contains $\lfloor \mu_t n^3 \rfloor$ $\left(\nu,\frac{4(\nu-3t)}{3}(1-\frac{1}{4^t})\right)$-configurations, where, for each of the configurations, $2^{t+1} \leq \nu \leq 4^t + 3t$.
\end{theorem}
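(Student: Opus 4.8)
The plan is to prove the theorem by induction on $t$, using \Cref{thm:Solymosi15b} as the engine for the base case. First I would note that the claimed family is the natural ``scale-$t$'' version of one parametrized shape: at $t=1$, $\nu=7$ the formula $\frac{4(\nu-3t)}{3}\bigl(1-\frac1{4^t}\bigr)$ evaluates to $4$, so the $(7,4)$-configurations delivered by \Cref{thm:Solymosi15b} are exactly the top endpoint of the (out-of-range but convenient) $t=1$ instance. This tells me the right object to induct on: a family $\mathcal{F}_t$ of configurations whose vertex number ranges over $[2^{t+1},4^t+3t]$, whose edge count on $\nu$ vertices is the stated quantity, together with the bound ``$S_0$ contains at least $\mu_t n^3$ members of $\mathcal{F}_t$.'' The base case $t=2$ I would obtain by a single application of the amplification step below to the $(7,4)$-configurations produced by \Cref{thm:Solymosi15b}.

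The heart of the argument is an \emph{amplification step} converting scale-$t$ configurations into scale-$(t+1)$ ones using only honest additive relations of $A$. Given a scale-$t$ configuration $C$ on a vertex set $V$, I would form its translates $V+s$ for one or two shifts $s$ (and their sum), producing a grid of up to four translated copies on roughly $2\nu$ or $4\nu$ vertices. The key point is that although a translate $V+s$ carries essentially none of the internal triples of $C$, every additive triple $a+b=c$ of $C$ spawns \emph{bridging triples} such as $a+(b+s)=c+s$ and $(a+s_1)+(b+s_2)=c+s_1+s_2$ that link distinct copies and are themselves genuine additive triples of $A$. Harvesting the bridging triples that happen to lie in $S_0$ raises the edge count while only doubling or quadrupling the vertex count, pushing the edge-to-vertex ratio toward $4/3$; the factor $1-4^{-t}$ records the proportion of bridging triples one can guarantee at scale $t$. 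Using one translation direction (two copies, generically disjoint) yields the lower portion of the range, and two directions (four copies, sharing $9t-3$ vertices and gaining four edges at the extreme) yields the top value $\nu=4^{t+1}+3(t+1)$, so that the full interval is covered by choosing the shifts with a controlled amount of overlap.

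For the counting I would run an averaging argument over the shift parameters. Starting from $\mu_t n^3$ scale-$t$ configurations and ranging over the $n$ (or $n^2$) choices of shift, only a bounded number of bridging triples is required for each fixed $t$, so the proportion of shifts for which all required bridges land inside $S_0$ is bounded below by a constant depending only on $c$ and $t$; popular shifts then extend a positive-density subfamily. Since a scale-$(t+1)$ configuration determines both the distinguished untranslated copy and the shifts up to $O_t(1)$ ambiguity, the multiplicity is bounded by a constant, and the process comfortably yields at least $\lfloor\mu_{t+1}n^3\rfloor$ configurations for a suitable $\mu_{t+1}=\mu_{t+1}(c)>0$, closing the induction.

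The main obstacle is exactly the realizability of the bridging triples inside the sparse set $S_0$: the additive relations linking the copies are automatically present in the full Cayley table of $A$, but $S_0$ contains only a $c$-fraction of all triples, so nothing combinatorial forces the bridges into $S_0$. Overcoming this is where the density hypothesis $|S_0|\ge cn^2$ and the group structure must be used together through the averaging step to locate a positive-density family of simultaneously ``linkable'' tuples. A secondary but unavoidable nuisance will be the bookkeeping: verifying that the vertex/edge arithmetic of the gluing lands exactly on $\frac{4(\nu-3t)}{3}\bigl(1-\frac1{4^t}\bigr)$ for every admissible $\nu$ (handling integrality at the endpoints), and that the prescribed overlaps can be realized by choosing the shifts so that the translated vertex sets meet in precisely the intended number of points.
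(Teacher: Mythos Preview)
Your approach is fundamentally different from the paper's, and the central step---the averaging over shifts---does not go through as stated. The paper never translates a configuration. Instead it iterates the following: apply the regularity lemma to $G_{S_i}$, use \Cref{thm:Solymosi15c} to find a triple $\vec y^{(i)}$ with $\gtrsim \mu n$ pairwise disjoint $S_i'$-good quadruples sharing the product signature $\vec y^{(i)}$, let $\cV$ be the set of first/second coordinates of those quadruples, and use regularity to extract a dense $S_{i+1}\subset S_i'$ all of whose edges live on $\cV$. Any $S_{i+1}$-good quadruple $(a,b,c,d)$ then automatically has each of $a,b,c,d$ sitting inside a good quadruple from the previous layer, and repeating $t-1$ times produces a tree of nested good quadruples $\cL_1,\dots,\cL_t$ together with the $3t$ ``product'' vertices $\vec y^{(1)},\dots,\vec y^{(t)}$; the arithmetic $\nu\le 4^t+3t$ and the edge count $\sum 4|\cL_i|\ge \tfrac{4(\nu-3t)}{3}(1-4^{-t})$ fall out of $|\cL_{i+1}|\le 4|\cL_i|$ and $|\cL_{i+1}|\ge 2|\cL_i|$. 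The $3t$ in the formula is precisely these shared product vertices, which your translate picture has no analogue for.

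The gap in your argument is the sentence ``the proportion of shifts for which all required bridges land inside $S_0$ is bounded below by a constant depending only on $c$ and $t$.'' Fix a scale-$t$ configuration $C$ with edges $(x_i,y_i,x_iy_i)$, $1\le i\le k$. For a bridging triple $(x_i,y_i+s,x_iy_i+s)$ to lie in $S_0$ you need $s\in N_{G_{S_0}}(x_i)-y_i$; to get all bridges simultaneously you need $s$ in the intersection $\bigcap_i (N(x_i)-y_i)$ of $k=\Theta(4^t)$ translates of neighbourhoods. Each set has size $\deg(x_i)$, perhaps $\approx cn$, but nothing prevents the intersection from being empty: the density hypothesis gives no correlation between neighbourhoods of distinct vertices, and one can build $S_0$ of density $c$ for which, for most $C$, no single $s$ works. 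Linearity of expectation only tells you the \emph{expected number} of bridges in $S_0$ is $\approx ck$, not that all $k$ ever hold at once. This is exactly the obstruction the paper overcomes with the regularity lemma (guaranteeing that the restricted vertex set $\cV$ still spans $\ge c_1 n^2$ edges of $S_0$), and without an analogous tool your induction cannot close. A secondary issue is that a rigid translate of $C$ spans no internal triples (since $(a+s)+(b+s)\ne (a+b)+s$), so the edge bookkeeping you sketch does not obviously reproduce the factor $\tfrac{4}{3}(1-4^{-t})$; but this is moot until the existence of good shifts is established.
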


Observe that Theorem \ref{thm:mu} reduces to Theorem \ref{thm:Solymosi15b} when $t=1$. On the other hand, for large $t$ we obtain asymptotically $\left(\nu,\frac{4\nu}{3}\right)$-configurations. 
In particular, by the monotonicity $f(n,m,k)\leq f(n,m,k+1)$, Conjecture \ref{conj:BES} holds true for infinitely many values of $m$ for finite  abelian groups $A$.

One can also show that the statement above (without the quantitative part)  holds for non-abelian groups as well. 

\begin{corollary}\label{nonabel}
For every $c>0$ and integer $t \geq 2$, there exists a threshold $N_t = N_t(c)$ such that if $G$ is a finite group with $|G| = n \geq N_t$, then for every subset $S_0$ of triples of the form $(a,b,ab) \in G^3$ with $|S_0| \geq cn^2$, 
	$S_0$ contains a $\left(\nu,\frac{4(\nu-3t)}{3}(1-\frac{1}{4^t})\right)$-configuration, where $2^{t+1} \leq \nu \leq 4^t + 3t$.
\end{corollary}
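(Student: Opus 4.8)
The plan is to deduce Corollary \ref{nonabel} from Theorem \ref{thm:mu} by a reduction from finite (possibly non-abelian) groups to finite abelian groups. The key observation is that Theorem \ref{thm:mu} is a quantitative, density-type statement, so one should be able to extract a single configuration in the non-abelian setting by passing to a large abelian subgroup or quotient, or by finding an abelian substructure rich enough in triples to apply the abelian theorem. Since we only need existence (the statement drops the quantitative count $\lfloor \mu_t n^3\rfloor$), we have considerable slack.

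First I would try to locate, inside the given triple system $S_0$ on the non-abelian group $G$, a dense sub-collection of triples whose relevant elements live in a large abelian subgroup $H \leq G$. The natural candidate is to use the fact that every group has a reasonably large abelian subgroup; more robustly, one could use a commuting-pair argument: since $|S_0| \geq cn^2$, a double counting over pairs $(a,b)$ with $ab \in $ the triple system should force many pairs to commute, or to force many triples to lie over a fixed coset of an abelian subgroup. The cleanest route is probably via a pigeonhole over cosets of a large abelian subgroup $H$: partition $G$ into cosets of $H$, observe that the triples $(a,b,ab)$ with $|S_0|\geq cn^2$ must concentrate, and restrict attention to a coset (or triple of cosets $gH, g'H, gg'H$) carrying $\Omega(|H|^2)$ triples, translated to sit inside $H$ itself.

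Next, once the triples are realized inside a finite abelian group $H$ with $|H| = n' \geq N$ and density at least $c'$ for some $c' = c'(c) > 0$, I would simply invoke Theorem \ref{thm:mu} with parameters $c'$ and $t$. That theorem produces $\lfloor \mu_t(c') (n')^3 \rfloor$ configurations of the required type; as long as $n'$ exceeds the threshold making this count at least $1$, we obtain at least one $\left(\nu,\frac{4(\nu-3t)}{3}(1-\frac{1}{4^t})\right)$-configuration with $2^{t+1} \leq \nu \leq 4^t + 3t$. Choosing $N_t = N_t(c)$ large enough (so that the induced $n'$ beats the abelian threshold after the loss from passing to $H$) gives the corollary. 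Care is needed to check that translating by coset representatives preserves the configuration structure: a configuration is a set of triples $(x,y,xy)$ on $\nu$ elements, and left/right translation by fixed group elements maps such triples to triples of the same form and preserves the incidence pattern among the elements, so the $(\nu,k)$-configuration is carried over intact.

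The hard part will be the reduction step itself, namely guaranteeing a large abelian subgroup $H$ together with a coset triple carrying a positive proportion of the triples of $S_0$. A finite group need not contain a large abelian subgroup in general (though one always has an abelian subgroup of order at least $\log_2|G|$, which is far too small), so a naive "pass to a big abelian subgroup" argument fails. The correct approach is likely more delicate: rather than finding an abelian \emph{subgroup}, one should exploit that the statement is only about the existence of one configuration and use a structural dichotomy — either the group is close to abelian in a quantitative sense and the abelian theorem applies after minor modification, or one directly adapts the combinatorial core of the proof of Theorem \ref{thm:mu}, which relies on additive-combinatorial arguments (doubling, covering, Cauchy-Schwarz over the group) that extend to the non-abelian setting at the cost of the quantitative count but not the existence statement. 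I expect the cleanest writeup to re-run the proof of Theorem \ref{thm:mu} symbolically, checking at each step that the only place commutativity was used can be replaced by a statement valid in any group once one settles for a single configuration rather than a positive-density count of them.
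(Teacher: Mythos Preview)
Your first instinct---pass to a large abelian subgroup $H\le G$, pigeonhole over coset triples $(gH,Hg',gHg')$ to find one carrying $\Omega(|H|^2)$ triples, and then work inside that abelian structure---is exactly what the paper does. The paper explicitly invokes Pyber's theorem: every finite group of order $n$ contains an abelian subgroup of order at least $e^{\kappa\sqrt{\log n}}$ for a universal constant $\kappa>0$. It then says to re-run the proof of Theorem~\ref{thm:mu} using the cosets $aF$, $Fb$, $aFb$ in place of $A_1$, $A_2$, $A$; this is the same coset adaptation you describe, and it is done (as you also suggest in your last paragraph) by following the argument step by step rather than by applying the abelian theorem as a black box and translating the final configuration.

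The gap in your proposal is that you abandon this correct approach for the wrong reason. You assert that the largest abelian subgroup one can guarantee has order about $\log_2|G|$ and that this is ``far too small.'' Both halves of this are off. First, Pyber's theorem gives the much better bound $e^{\kappa\sqrt{\log n}}$, which is precisely the missing ingredient. Second, and more importantly, even a bound as weak as $\log_2 n$ would suffice here: after the coset pigeonhole you have density at least $c$ on an abelian group of order $m=|H|$, and Theorem~\ref{thm:mu} produces at least $\lfloor \mu_t(c)m^3\rfloor$ configurations, which is $\ge 1$ once $m$ exceeds a constant depending only on $c$ and $t$. Any abelian subgroup whose order tends to infinity with $n$ is enough for the existence statement. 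So the ``naive'' route does not fail; you were simply missing the relevant group-theoretic input (Pyber), and you overestimated how large $|H|$ needs to be. Your fallback plan---to re-examine the proof of Theorem~\ref{thm:mu} and argue that commutativity can be dropped wholesale---is not what the paper does and would require substantially more work; the paper keeps commutativity by restricting to the abelian subgroup $F$ and only carries the coset translations along for the ride.
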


The above corollary follows from Theorem \ref{thm:mu} and the following result of Pyber. 

\begin{theorem}[Pyber \cite{PY}]
There is a universal constant $\nu >0$ so that every group of
order $n$ contains an abelian subgroup of order at least $e^{\nu\sqrt{\log{n}}}.$
\end{theorem}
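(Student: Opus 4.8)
The plan is to reduce the problem to $p$-groups, where the exponent $\sqrt{\log n}$ already appears and is in fact best possible, and then to prove by induction on $|G|$ that no finite group does asymptotically worse. Throughout write $a(G)$ for the largest order of an abelian subgroup of $G$, so the goal is to produce an absolute $\nu>0$ with $\log a(G)\geq \nu\sqrt{\log n}$.

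First I would settle the $p$-group case. Let $P$ be a $p$-group and let $A\leq P$ be a maximal abelian normal subgroup. A standard fact is that such an $A$ is self-centralizing, $C_P(A)=A$, so conjugation embeds $P/A$ into $\mathrm{Aut}(A)$. Since $A$ is abelian of order $p^{\alpha}$ one has $|\mathrm{Aut}(A)|\leq p^{\alpha^2}$, whence $\log_p|P|=\alpha+\log_p|P/A|\leq \alpha+\alpha^2\leq(\alpha+1)^2$, giving $\alpha\geq\sqrt{\log_p|P|}-1$ and therefore $\log a(P)\geq c\sqrt{\log|P|}$. For nilpotent $G=P_1\times\cdots\times P_k$ (the direct product of its Sylow subgroups) I take the direct product of the abelian subgroups found in each factor; writing $x_i=\log|P_i|$ and using $\sum_i\sqrt{x_i}\geq\sqrt{\sum_i x_i}$ together with $\log p_i\geq\log 2$ yields $\log a(G)\geq c\sqrt{\log|G|}$ as well. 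It is worth stressing that restricting to a single Sylow subgroup of a general group is \emph{not} enough: the largest one may have order only $(\log n)^{O(1)}$, which would give merely $e^{c\sqrt{\log\log n}}$, so combining over all primes inside a nilpotent subgroup, rather than over one prime, is essential.

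For arbitrary $G$ I would induct on $|G|$ using the generalized Fitting subgroup $F^*(G)=F(G)E(G)$ and its defining property $C_G(F^*(G))\subseteq F^*(G)$, which turns conjugation into an embedding $G/Z(F^*(G))\hookrightarrow\mathrm{Aut}(F^*(G))$. The dichotomy is as follows. If $F^*(G)$ is already \emph{large}, then since it is a central product of a nilpotent group with quasisimple components I can extract a large abelian subgroup from it directly and push it into $G$. If instead $F^*(G)$ is \emph{small}, then the faithful action forces $|G|\leq|F^*(G)|\cdot|\mathrm{Aut}(F^*(G))|$, and a bound of the shape $|\mathrm{Aut}(H)|\leq|H|^{O(\log|H|)}$ keeps $\log|G|$ within a constant factor of $\log|F^*(G)|$; the abelian subgroup produced inside the strictly smaller group $F^*(G)$ by the inductive hypothesis is then still of order $e^{\Omega(\sqrt{\log|G|})}$. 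The case of an elementary abelian minimal normal subgroup feeds straight back into the $p$-group estimate.

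The main obstacle, and the step that forces the Classification of Finite Simple Groups, is the quasisimple building blocks: I need the uniform bound $a(S)\geq e^{c\sqrt{\log|S|}}$ for every non-abelian finite simple $S$, checked family by family. The alternating groups $A_m$ carry elementary abelian subgroups of order $\approx 2^{m/2}$, which comfortably beats the target since $\log|A_m|\approx m\log m$ and $m\gg\sqrt{m\log m}$; the groups of Lie type carry large abelian unipotent subgroups (a block of order $\approx q^{\lfloor r^2/4\rfloor}$ in a rank-$r$ classical group, which is even $|S|^{\Omega(1)}$) or maximal tori; and the sporadic groups are a finite check. For a minimal normal subgroup $M=S^{\kappa}$ the abelian subgroup is taken $\kappa$-fold, multiplying the exponent by $\kappa$. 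The delicate points I expect to fight with are (i) the rank-versus-dimension balance for Lie-type groups, ensuring the exponent never drops below $\sqrt{\log|S|}$, and (ii) closing the induction across a non-abelian chief factor, which requires controlling both $|\mathrm{Out}(S)|$ and the permutation action of $G$ on the $\kappa$ factors of $M$, so that passing from $F^*(G)$ back up to $G$ costs only the quasipolynomial automorphism factor already budgeted. Since $p$-groups realize the exponent $\sqrt{\log n}$ exactly, the argument is ultimately a demonstration that these non-nilpotent ingredients can never be worse than the nilpotent extremal case.
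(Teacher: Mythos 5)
This statement is not proved in the paper at all: it is quoted from Pyber's article \cite{PY} and used as a black box to deduce \cref{nonabel} from \cref{thm:mu}, so your proposal can only be measured against Pyber's actual argument. Your base cases are fine and do coincide with the classical ingredients: the self-centralizing maximal abelian normal subgroup argument with $|\mathrm{Aut}(A)|\leq p^{\alpha^2}$ is the old Burnside--Miller bound for $p$-groups, the passage to nilpotent groups via $\sum_i\sqrt{x_i}\geq\sqrt{\sum_i x_i}$ is correct, and the CFSG fact that every non-abelian finite simple $S$ satisfies even the polynomial bound $a(S)\geq|S|^{c}$ is true. The genuine gap is in the inductive glue. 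In the ``small $F^*(G)$'' branch you claim that $|\mathrm{Aut}(H)|\leq|H|^{O(\log|H|)}$ ``keeps $\log|G|$ within a constant factor of $\log|F^*(G)|$''. It does not: from $|G|\leq|F^*|\cdot|\mathrm{Aut}(F^*)|\leq|F^*|^{1+O(\log|F^*|)}$ you get only $\log|G|=O\bigl((\log|F^*|)^2\bigr)$, i.e.\ $\log|F^*|\geq c\sqrt{\log|G|}$, and feeding this into the inductive hypothesis produces an abelian subgroup of order $e^{\nu\sqrt{\log|F^*|}}\geq e^{\nu'(\log|G|)^{1/4}}$ --- exponent $\frac14$, not $\frac12$ --- so the induction does not close, and nested applications degrade the exponent further. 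In fact no induction resting solely on $a(F^*(G))$ can work in that branch: if $\log|F^*|$ is much smaller than $\log|G|$, then $e^{\nu\sqrt{\log|F^*|}}$ is simply too small, so when $F^*(G)$ is small the large abelian subgroup must be manufactured from the largeness of $G/F^*(G)$ acting on $F^*(G)$, not found inside $F^*(G)$. Your dichotomy is oriented the wrong way.

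Closing this gap is precisely where the content of Pyber's proof lies, and it requires polynomial rather than quasipolynomial control of the layers above $F^*(G)$. For the solvable layer this is the P\'alfy--Wolf theorem: $G/F(G)$ acts faithfully and completely reducibly on $F(G)/\Phi(F(G))$ (Gasch\"utz), and completely reducible solvable linear groups on a module $V$ have order at most $|V|^{O(1)}$, giving $\log|G|=O(\log|F(G)|)$ with a true constant and no square loss. For the layer permuting the $k$ quasisimple components of $E(G)$, no constant-factor bound holds at all --- for $G=T\wr S_k$ with $T$ simple one has $\log|F^*(G)|\asymp k$ but $\log|G|\asymp k\log k$ --- so one needs genuine permutation-group input of Babai--Cameron--P\'alfy type: a subgroup of $S_k$ either has order at most $c^k$ (absorbable into the budget) or contains large alternating sections, which supply elementary abelian $2$-subgroups of rank linear in their degree; and since abelian subgroups of quotients do not lift to abelian subgroups of $G$, these must then be pulled back by hand (e.g.\ via diagonal-type constructions as in wreath products). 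The remaining layer is controlled by $|\mathrm{Out}(S)|=O(\log|S|)$ from CFSG. Your sketch names the right objects, but as written the quasipolynomial automorphism budget cannot deliver the exponent $\sqrt{\log n}$, only $(\log n)^{1/4}$.
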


For the proof of Corollary \ref{nonabel} one should find a large abelian subgroup, $F\leq G,$ and follow the steps of the proof below using cosets $aF, Fb$ and $aFb,$ where the selected triples form a dense subset. The technique is similar to the one applied in \cite{Solymosi15}, but it would make the proof below much harder to follow. We decided to omit the proof.

One of our main tools is the following standard consequence of the regularity lemma of Szemer\'{e}di \cite{Szemeredi1975}, in which certain edges --- those incident to $V_0$, between irregular pairs, or between regular pairs having density at most $2\eps$ --- are removed. We are going to iterate the regularity lemma using it in a way similar to standard applications, like in \cite{KoSiSze}.

\begin{theorem}[Szemer\'{e}di \cite{Szemeredi1975}]
	\label{thm:regularitylemma}
	For each $\eps >0$, there exist integers $K = K(\eps)$ and $n_0 = n_0(\eps)$ such that for every bipartite graph $G$ with $n \geq n_0$ vertices in each bipartition ($A_1,A_2$) and at least $24 \eps n^2$ edges, there is a partition of the vertex set $V(G) = A_1 \cup A_2 = V_0 \cup V_1 \cup \cdots \cup V_{2k}$ and a subgraph $G'$ of $G$ satisfying the following conditions:
	\begin{enumerate}[(i)]
		\item $\frac 1 \eps \leq 2k \leq K$
		\item $V_i \subset A_1$ and $V_{k+i} \subset A_2$ for all $1 \leq i \leq k$
		\item $|V_0| \leq 2 \eps n$ and
		\[
		\dfrac n k (1 - \eps) \leq |V_1| = \cdots = |V_{2k}| \leq \dfrac n k
		\]
		\item $|E(G')| \geq |E(G)|/2$
		\item the graph $G'$ has no edges incident to $V_0$
		\item if there is an edge between $V_i$ and $V_{k+j}$ in $G'$, for some $1 \leq i, j \leq k$, then for subsets $U \subset V_i$ and $W \subset V_{k+j}$ with $|U| \geq \eps |V_i|$ and $|W| \geq \eps |V_{k+j}|$, the number of edges between $U$ and $W$ in $G'$ is at least $\eps |U||W|$.
	\end{enumerate}
\end{theorem}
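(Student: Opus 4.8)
The plan is to deduce this cleaned bipartite statement from the classical form of Szemer\'edi's regularity lemma---the one producing an equitable $\eps$-regular partition, in the version that may refine a prescribed initial partition and that permits a prescribed lower bound on the number of parts. Once such a partition is in hand, conditions (iv), (v), and (vi) follow from a routine edge-deletion argument, so the whole statement is essentially a packaging of the lemma together with three clean-up deletions.

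First I would apply the classical regularity lemma to $G$ with regularity parameter $\eps/2$, taking the bipartition $\{A_1,A_2\}$ as the initial partition and demanding at least $\lceil 1/\eps\rceil$ parts. Because the resulting equitable partition refines $\{A_1,A_2\}$, each part $V_i$ lies entirely inside $A_1$ or inside $A_2$, which gives the refinement structure; since $|A_1|=|A_2|=n$ and the non-exceptional parts share a common size, the two sides receive the same number $k$ of parts after we move the excess vertices of the longer side into the exceptional set, and relabelling so that $V_1,\dots,V_k\subset A_1$ and $V_{k+1},\dots,V_{2k}\subset A_2$ yields (ii). The minimum-parts requirement gives $\tfrac1\eps\le 2k\le K$ with $K=K(\eps)$ the output of the lemma, which is (i); the initial exceptional set has size at most $\tfrac\eps2\cdot 2n=\eps n$, and equalising the two sides adds at most $\eps n$ more, so $|V_0|\le 2\eps n$ and, after trimming to a common block size, $\tfrac nk(1-\eps)\le|V_i|\le\tfrac nk$, which is (iii).

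Next I would form $G'$ from $G$ by deleting three kinds of edges: those incident to $V_0$, those lying in an irregular pair $(V_i,V_{k+j})$, and those lying in a regular pair $(V_i,V_{k+j})$ of density at most $2\eps$. Properties (v) and (vi) then hold by construction: for a surviving cross pair the density exceeds $2\eps$, so $\tfrac\eps2$-regularity forces $e(U,W)\ge(2\eps-\tfrac\eps2)|U||W|\ge\eps|U||W|$ for all $U\subset V_i$, $W\subset V_{k+j}$ with $|U|\ge\eps|V_i|$, $|W|\ge\eps|V_{k+j}|$. For (iv) I would bound each deleted class separately: edges at $V_0$ by $|V_0|\,n\le 2\eps n^2$; edges in irregular pairs by at most $\tfrac\eps2\cdot(2k)^2\cdot(n/k)^2=2\eps n^2$, using that at most a $\tfrac\eps2$-fraction of pairs are irregular and each cross pair carries at most $(n/k)^2$ edges; and edges in low-density pairs by at most $k^2\cdot 2\eps(n/k)^2=2\eps n^2$. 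Hence at most $6\eps n^2$ edges are removed, which is below $|E(G)|/2\ge 12\eps n^2$, and (iv) follows.

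The genuinely delicate point is not the counting but arranging the partition to be simultaneously equitable, a refinement of $\{A_1,A_2\}$, and of size bounded below by $1/\eps$, with the two sides balanced to a common block size while $V_0$ stays within its budget. This is exactly what the initial-partition-and-minimum-parts form of the regularity lemma provides, and the only care required is the bookkeeping that absorbs the side imbalance and the size remainders into $V_0$; running the lemma with parameter $\eps/2$ rather than $\eps$ supplies the slack that makes both $|V_0|\le 2\eps n$ and the density margin in (vi) go through. Everything else is a direct verification.
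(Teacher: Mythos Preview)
The paper does not give a proof of this theorem at all: it is stated as ``a standard consequence of the regularity lemma of Szemer\'edi'' and simply cited. So there is no paper proof to compare against; your proposal supplies exactly the standard derivation the authors allude to.

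Your argument is correct. Running the classical lemma with parameter $\eps/2$, refining the initial bipartition, and then deleting the three edge classes is precisely the usual route, and your counts are sound: the three deletions remove at most $2\eps n^2$ edges each, for a total of $6\eps n^2\le 12\eps n^2\le |E(G)|/2$, and in a surviving pair the $(\eps/2)$-regularity together with density exceeding $2\eps$ gives $d(U,W)>2\eps-\eps/2>\eps$ for any $U,W$ of the stated sizes. One small point worth making explicit: in (vi) the edge count refers to $G'$, not $G$, and this is fine because within a surviving pair $G'$ retains \emph{all} edges of $G$ (your deletions remove entire pairs, never individual edges inside a kept pair). The only other bookkeeping to watch is that equalising the number of parts on the two sides and trimming to a common block size must be done so that the parts remain inside $A_1$ or $A_2$ and the leftover vertices go to $V_0$ without exceeding the $2\eps n$ budget; your $\eps/2$ slack handles this as you indicate.
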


\section{Some definitions}
In this section, $A$ is a finite abelian group and $S$ is a set of triples of the form $(a,b,ab) \in A^3$.

We define a bipartite graph $G_S$ with $2n$ vertices and $|S|$ edges that captures the triples of $S$: the vertex set of $G_S$ is $A_1 \cup A_2$, where $A_1$ and $A_2$ are copies of $A$, and $(a,b) \in A_1 \times A_2$ is an edge if and only if $(a,b,ab) \in S$.

We say that a quadruple $(a,b,c,d) \in A^4$ is \emph{$S$-good} if $a \neq c$, $ab = cd$, and $(a,b,ab),(a,d,ad),(c,b,cb),(c,d,cd) \in S$.

Note that every $S$-good quadruple in $A^4$ gives us a $(7,4)$-configuration in $S$. In \cite{Solymosi15}, a slightly stronger result than \cref{thm:Solymosi15b} was proved:

\begin{theorem}[Solymosi \cite{Solymosi15}]
	\label{thm:Solymosi15c}
	For every $c>0$, there exists a constant $\mu_1 = \mu_1(c) > 0$ such that if $A$ is a finite abelian group with $|A| = n$, then for every set $S$ of triples of the form $(a,b,ab) \in A^3$ with $|S| \geq cn^2$, there are at least $\lfloor \mu_1 n^3 \rfloor$ $S$-good quadruples in $A^4$.
\end{theorem}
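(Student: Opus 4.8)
The plan is to recast $S$-good quadruples as a purely additive--geometric configuration in $A \times A$ and then produce $\Omega(n^3)$ of them by a supersaturation argument built on Theorem~\ref{thm:regularitylemma}. Since $A$ is abelian I write it additively, so a triple $(a,b,ab)$ becomes $(a,b,a+b)$ and the bipartite graph $G_S$ is recorded as the point set $E = \{(a,b) : (a,b,a+b) \in S\} \subseteq A_1 \times A_2 = A \times A$ with $|E| = |S| \ge cn^2$. Given an $S$-good quadruple $(a,b,c,d)$, put $s = a+b = c+d$ and $\delta = a - c \neq 0$; then its four triples are exactly the four points $(a, s-a)$, $(c, s-c)$, $(a, s-c)$, $(c, s-a)$ of $E$. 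These are the corners of an axis-aligned rectangle with columns $\{c, c+\delta\}$ and rows $\{s-a, s-c\}$, and the horizontal gap $\delta$ equals the vertical gap $(s-c)-(s-a) = \delta$: in other words, \emph{$S$-good quadruples are exactly the axis-aligned squares in $E$}, equivalently the rectangles one of whose diagonals $\{(a,s-a),(c,s-c)\}$ lies on a common anti-diagonal $D_s := \{x+y = s\}$. This reformulation is where the hypothesis that $A$ is abelian enters, and it is the crux of the whole argument.

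For each $s \in A$ let $L_s = \{a : (a, s-a) \in E\}$, so that $\sum_s |L_s| = |E| \ge cn^2$ and, by Cauchy--Schwarz, $\sum_s |L_s|^2 \ge c^2 n^3$. The quantity $\sum_s |L_s|^2$ counts ordered pairs of points of $E$ sharing an anti-diagonal, i.e.\ the candidate diagonals $\{(a,s-a),(c,s-c)\}$ of our squares; discarding the $O(n^2)$ degenerate pairs with $a=c$ leaves $\Omega(n^3)$ genuine candidates. Each candidate is completed to an $S$-good square precisely when the two remaining corners $(a,s-c)$ and $(c,s-a)$ both lie in $E$, so the theorem reduces to showing that a positive proportion of these $\Omega(n^3)$ candidate diagonals admit both completing corners.

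To force the completions I would apply Theorem~\ref{thm:regularitylemma} to $G_S$ (with $\eps$ a small multiple of $c$) to pass to the subgraph $G'$ with $|E(G')| \ge |S|/2 \ge \tfrac{c}{2}n^2$, whose edges all lie in regular pairs $(V_i, V_{k+j})$ enjoying the uniform lower-density property (vi): every pair of linear-sized subsets spans at least an $\eps$-fraction of possible edges. The intended mechanism is a Varnavides-type supersaturation: restrict attention to the $\Omega(n)$ \emph{rich} anti-diagonals carrying $|L_s| \ge \tfrac{c}{2} n$ points (these carry at least half of the mass, hence support $\Omega(n^3)$ of the candidate diagonals), and on each such $s$ count the completions via the auxiliary digraph on $L_s$ in which $u \to u'$ whenever $(u, s-u') \in E$; an $S$-good square is exactly a bidirected pair $u \leftrightarrow u'$ with $u \neq u'$. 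Property (vi), transported along the reflection $y \mapsto s-y$ that carries the neighbourhood $N(u)$ to the out-neighbourhood $s - N(u)$, should prevent this digraph from being sparse or tournament-like on any linear-sized subset, and a counting argument would then yield $\Omega(|L_s|^2)$ bidirected pairs on a positive fraction of rich anti-diagonals, and hence $\Omega(n^3)$ squares in total, with $\mu_1(c)$ of the (tower-type) size typical of regularity arguments.

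The main obstacle is exactly this completion step, and its difficulty is that the two completing corners $(a,s-c)$ and $(c,s-a)$ do not lie on $D_s$ but on the \emph{shifted} anti-diagonals $D_{s+\delta}$ and $D_{s-\delta}$, with the shift $\delta = a-c$ coupled to the chosen pair. This coupling is precisely what defeats a naive second moment: Cauchy--Schwarz only upper-bounds a correlation with a prescribed shift, so one cannot simply square the count $\sum_s |L_s|^2$. The role of the regularity lemma is to replace the average density of $E$ by density on \emph{every} large subset, which is what rules out the adversarial configurations (completing corners systematically absent along the shifted anti-diagonals) that a merely dense set could in principle exhibit. Making the reflection-and-averaging argument rigorous against the arbitrary, non-group-structured regularity partition is the technical heart of the proof, and is where I would expect to spend most of the effort.
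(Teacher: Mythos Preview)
This theorem is not proved in the present paper: it is quoted from \cite{Solymosi15} and used throughout as a black box (it is the base case $t=1$ on which the induction in Sections~3--4 rests). There is therefore no in-paper proof to compare your proposal against.

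On its own merits, your reformulation is correct and is essentially the geometric picture underlying Solymosi's original argument: writing $A$ additively, an $S$-good quadruple is exactly an axis-aligned ``square'' in $E\subset A\times A$ (a rectangle whose two side-lengths agree as group elements), with one diagonal lying on a single anti-diagonal line $D_s=\{x+y=s\}$. The Cauchy--Schwarz step giving $\Omega(n^3)$ candidate diagonals is fine, and invoking the regularity lemma to upgrade average density to density on all large subsets is the right tool.

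Where the proposal falls short is exactly where you say it does. You correctly isolate the obstruction --- the shift $\delta=a-c$ couples the chosen diagonal to the anti-diagonals $D_{s\pm\delta}$ carrying the two completing corners, so a second Cauchy--Schwarz cannot close the loop --- and you name regularity as the remedy, but you do not carry out the completion. The sentence ``Property~(vi), transported along the reflection $y\mapsto s-y$, should prevent this digraph from being sparse or tournament-like'' is a hope rather than an argument. Concretely, for a fixed rich $s$ the set $L_s$ may be spread thinly across many cells $V_i$, so that no single cell contains an $\eps$-fraction and condition~(vi) is never triggered; and even when $a\in V_i$, $c\in V_{i'}$ lie in good cells, the pairs $(V_i,V_{k+j'})$ and $(V_{i'},V_{k+j})$ hosting the completing edges may themselves be sparse or irregular. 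Handling this cell-by-cell bookkeeping --- choosing $s$ (or averaging over $s$) so that enough of the relevant pairs are simultaneously dense and regular --- is precisely the content of the proof in \cite{Solymosi15}, and without it what you have is a correct outline, not a proof.
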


For $\vec x = (x_1,x_2,x_3) \in A^3$, denote by $Q_S(\vec x)$ the set of $S$-good quadruples $(a,b,c,d)$ such that $x_1 = cb$, $x_2 = ab$ and $x_3 = ad$, and let $q_S(\vec x) = |Q_S(\vec x)|$.

Note that $q_S(\vec x) = 0$ if $x_1 = x_2$. We use group properties to obtain bounds for $q_S(\vec{x})$ and the number of triples with $q_S(\vec{x})\geq1$.

\begin{fact}
	\label{fact:a=a'}
	Let $(a,b,c,d)$ and $(a',b',c',d')$ be quadruples in $Q_S(\vec{x})$ for some $\vec{x} \in A^3$. If $a' = a$, then $(a',b',c',d') = (a,b,c,d)$.
	
	In particular, $q_S(\vec x) \leq n$ for all $\vec x \in A^3$.
\end{fact}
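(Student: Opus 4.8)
The plan is to observe that, once the triple $\vec x = (x_1,x_2,x_3)$ is fixed, each of the coordinates $b$, $c$, $d$ of a quadruple in $Q_S(\vec x)$ is forced by the value of its first coordinate $a$ through the three defining equations $x_1 = cb$, $x_2 = ab$, $x_3 = ad$, using nothing more than invertibility in the group $A$.

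First I would use $x_2 = ab$ to solve $b = a^{-1} x_2$, and $x_3 = ad$ to solve $d = a^{-1} x_3$; here we only need that $a$ is invertible in $A$. Next, from $x_1 = cb$ I would solve $c = x_1 b^{-1}$, and substituting the expression just found for $b$ yields $c = x_1 x_2^{-1} a$. Thus $b$, $c$, and $d$ are each determined as an explicit function of $a$ and the fixed triple $\vec x$. Consequently, if $(a,b,c,d)$ and $(a',b',c',d')$ both lie in $Q_S(\vec x)$ and satisfy $a' = a$, then $b' = b$, $d' = d$, and hence $c' = c$, so the two quadruples coincide, which is the first assertion.

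For the quantitative consequence, I would consider the projection $(a,b,c,d) \mapsto a$ from $Q_S(\vec x)$ to $A$. The first part says precisely that distinct quadruples in $Q_S(\vec x)$ have distinct first coordinates, so this projection is injective, and therefore $q_S(\vec x) = |Q_S(\vec x)| \leq |A| = n$.

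I do not expect any genuine obstacle here: the statement is an immediate algebraic consequence of the unique solvability of equations of the form $a \cdot y = x$ in a group. The only point worth flagging is that commutativity is not actually used in this particular \textbf{Fact}; only cancellation/invertibility is needed, which is consistent with the earlier remark that the analogous statement survives in the non-abelian setting with the same bookkeeping.
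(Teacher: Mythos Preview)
Your proof is correct and follows essentially the same approach as the paper: both use cancellation in $A$ to deduce $b'=b$, $d'=d$, and then $c'=c$ from the defining equations $x_2=ab$, $x_3=ad$, $x_1=cb$, with the bound $q_S(\vec x)\le n$ following from the resulting injectivity of $(a,b,c,d)\mapsto a$. Your version is merely more explicit in writing out the inverses, and your remark that commutativity is not needed here is accurate.
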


\begin{proof}
	Suppose that $a' = a$. By $ab = a'b'$ and $ad = a'd'$, we have $b = b'$ and $d = d'$. By $cb = c'b'$, we have $c = c'$.
\end{proof}

\begin{fact}
	\label{fact:q>=1}
	Let $(a,b,c,d)$ and $(a',b',c',d')$ be $S$-good quadruples with $ab = a'b'$. Then, $cb = c'b'$ if and only if $ad = a'd'$.
	
	In particular, the number of triples $\vec x$ of $A$ with $q_S(\vec x) \geq 1$ is at most $n(n-1) < n^2$.
\end{fact}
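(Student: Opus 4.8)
The plan is to exploit commutativity of $A$ through a single symmetric product identity, and then read the counting bound directly off the first part. For the equivalence, I would first observe that since $A$ is abelian the trivial rearrangement $(cb)(ad) = abcd = (ab)(cd)$ holds for \emph{any} quadruple, and likewise $(c'b')(a'd') = (a'b')(c'd')$. Next I would invoke $S$-goodness of both quadruples: by definition $ab = cd$ and $a'b' = c'd'$, so together with the hypothesis $ab = a'b'$ all four products coincide, $ab = cd = a'b' = c'd'$. Substituting into the two rearrangements gives
\[
(cb)(ad) = (ab)(cd) = (a'b')(c'd') = (c'b')(a'd').
\]
The equivalence is then immediate by cancellation in the group: if $cb = c'b'$, cancelling this common factor from $(cb)(ad) = (c'b')(a'd')$ yields $ad = a'd'$, and conversely cancelling $ad = a'd'$ yields $cb = c'b'$.

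For the counting bound, I would argue that the first part says precisely that the pair $(x_1,x_2)$ determines $x_3$ among triples carrying an $S$-good quadruple. Concretely, if $\vec x = (x_1,x_2,x_3)$ and $\vec x' = (x_1,x_2,x_3')$ both satisfy $q_S \geq 1$, I pick witnesses $(a,b,c,d) \in Q_S(\vec x)$ and $(a',b',c',d') \in Q_S(\vec x')$; then $ab = x_2 = a'b'$ and $cb = x_1 = c'b'$, so the first part forces $ad = a'd'$, i.e.\ $x_3 = x_3'$. Hence the number of admissible triples is at most the number of admissible pairs $(x_1,x_2)$. Finally I would note that $q_S(\vec x) \geq 1$ forces $x_1 \neq x_2$: any witness has $x_1 = cb$, $x_2 = ab$ with $a \neq c$, so $x_1 = x_2$ would give $a = c$ after cancelling $b$, a contradiction. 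Thus the pairs $(x_1,x_2)$ range over ordered pairs of distinct elements of $A$, of which there are $n(n-1) < n^2$.

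The content here is light; the one point that genuinely matters is recognizing that the symmetric identity $(cb)(ad) = (ab)(cd)$ — which relies essentially on commutativity — is what collapses the four-way equality into a clean cancellation, so that no case analysis or explicit ``ratio'' juggling is required. The passage from the equivalence to the bound is then bookkeeping, the only subtlety being the use of $a \neq c$ to sharpen $n^2$ down to $n(n-1)$.
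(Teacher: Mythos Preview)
Your proof is correct and follows essentially the same approach as the paper: the identity $(cb)(ad)=(ab)(cd)$ you use is just a rearrangement of the paper's $ad = ab(cb)^{-1}cd$, and both reduce the equivalence to a single group cancellation. Your treatment of the ``in particular'' counting bound is in fact more detailed than the paper's, which leaves that step implicit.
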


\begin{proof}
	Note that $cd = ab = a'b' = c'd'$.
	
	If $cb = c' b'$, then
	\[
	ad = a b (c b)^{-1} cd = a'b' (c'b')^{-1} c'd' = a'd'.
	\]
	Similarly, we have $ad = a'd'$ implies $cb = c'b'$.
\end{proof}

Let $(a,b,c,d)$ and $(a',b',c',d')$ be distinct quadruples of $A$. We say that they are \emph{disjoint} if $\{ a,c \} \cap \{ a',c' \} = \{ b,d \} \cap \{ b',d' \} = \emptyset$. In this case, the triples $(a,b,ab), (a,d,ad), (c,b,cb), (c,d,cd), (a',b',a'b'), (a',d',a'd'), (c',b',c'b'), (c',d',c'd')$ are all distinct. In our calculations, it will be much easier to estimate the number of triples in $S$ if the $S$-good quadruples considered are pairwise disjoint.

The following fact implies that at least a third of the $S$-good quadruples in $Q_S(\vec{x})$ are pairwise disjoint, given any $\vec{x} \in A^3$. Let $\tilde{Q}_S(\vec{x})$ be a subset of $Q_S(\vec{x})$ consisting of $\tilde{q}_S(\vec{x}) \geq q_S(\vec{x})/3$ pairwise disjoint $S$-good quadruples.

\begin{fact}
	\label{fact:share}
	Let $(a,b,c,d)$ and $(a',b',c',d')$ be distinct but not disjoint quadruples in $Q_S(\vec{x})$ for some $\vec{x} \in A^3$. Then, either one of the following hold:
	\begin{enumerate}
		\item $cb = ad$, $a' = c$, $c' = a$, $b' = d$ and $d' = b$, or
		\item $cb \neq ad$, $a' = c$, $c' \neq a$, $b' = d$ and $d' \neq b$, or
		\item $cb \neq ad$, $a' \neq c$, $c' = a$, $b' \neq d$ and $d' = b$.
	\end{enumerate}
	
	In particular, for each $(a,b,c,d) \in Q_S(\vec{x})$, all but almost 2 other quadruples in $Q_S(\vec{x})$ are disjoint from $(a,b,c,d)$.
\end{fact}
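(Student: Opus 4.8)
The plan is to parametrize $Q_S(\vec x)$ by a single coordinate and read off every coincidence from a group ratio. Fix $\vec x = (x_1,x_2,x_3)$. For any $S$-good quadruple $(a,b,c,d) \in Q_S(\vec x)$ the defining relations $ab = x_2$, $cb = x_1$, $ad = x_3$ determine the remaining entries from $a$ alone: by commutativity $b = a^{-1}x_2$, $c = a x_1 x_2^{-1}$, and $d = a^{-1}x_3$ (this is exactly the content of \cref{fact:a=a'}). I would first record the key identity $x_1 x_3 = x_2^2$: since the quadruple is $S$-good, $cd = ab = x_2$, so $x_1 x_3 = (cb)(ad) = abcd = (ab)(cd) = x_2^2$. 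This single observation is what collapses the four a priori independent coincidence conditions below into just two.

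Next I would write the second quadruple as $(a',b',c',d')$ with $b' = (a')^{-1}x_2$, $c' = a' x_1 x_2^{-1}$, $d' = (a')^{-1}x_3$, and express each possible coincidence as a condition on the ratio $r := a'a^{-1}$. Since $a = a'$ forces the quadruples to be equal, distinctness gives $r \neq 1$, and likewise $c \neq c'$, $b \neq b'$, $d \neq d'$; hence the two quadruples fail to be disjoint precisely when one of $a = c'$, $c = a'$, $b = d'$, $d = b'$ holds. A direct substitution using $x_3 = x_2^2 x_1^{-1}$ shows
\[
a = c' \iff b = d' \iff r = x_2 x_1^{-1}, \qquad c = a' \iff d = b' \iff r = x_1 x_2^{-1}.
\]
So only two ratios can produce a non-disjoint pair. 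Note that $x_1 \neq x_2$ whenever $Q_S(\vec x) \neq \emptyset$ (otherwise $cb = ab$ forces $a = c$), so both candidate ratios differ from $1$.

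Finally I would split into cases according to whether these two ratios coincide. They are equal iff $x_1 x_2^{-1} = x_2 x_1^{-1}$, i.e. $x_1^2 = x_2^2$, which by $x_1 x_3 = x_2^2$ is equivalent to $x_1 = x_3$, that is $cb = ad$. If $cb = ad$, the common value $r = x_1 x_2^{-1} = x_2 x_1^{-1}$ triggers all four coincidences at once, giving $a' = c$, $c' = a$, $b' = d$, $d' = b$ — case (1). If $cb \neq ad$ the two ratios are distinct, so exactly one occurs: $r = x_1 x_2^{-1}$ yields $a' = c$, $b' = d$ while $c' \neq a$, $d' \neq b$ (case (2)), and $r = x_2 x_1^{-1}$ yields $c' = a$, $d' = b$ while $a' \neq c$, $b' \neq d$ (case (3)). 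For the closing ``in particular'' statement, the non-disjoint partners of $(a,b,c,d)$ correspond to at most the two values $a' \in \{a x_1 x_2^{-1},\, a x_2 x_1^{-1}\}$, so at most two other quadruples of $Q_S(\vec x)$ fail to be disjoint from it.

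I expect no genuine obstacle beyond the bookkeeping once the identity $x_1 x_3 = x_2^2$ is in hand; the only point requiring care is invoking $x_1 \neq x_2$ (equivalently $a \neq c$) to guarantee $r \neq 1$, so that the four listed coincidences really are the only ways two distinct quadruples of $Q_S(\vec x)$ can meet.
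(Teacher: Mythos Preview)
Your proof is correct. Both arguments rule out the coordinate-wise equalities $a=a'$, $b=b'$, $c=c'$, $d=d'$ via \cref{fact:a=a'} and then reduce to the two linked coincidences $a'=c \Leftrightarrow b'=d$ and $c'=a \Leftrightarrow d'=b$, splitting into cases according to which of these holds. The paper obtains the two biconditionals directly from the relations $a'b'=ab=cd$ and $c'd'=a'b'=ab$, and then simply asserts that $cb=ad$ holds exactly in case~(1). You instead parametrize $Q_S(\vec x)$ by the first coordinate, derive the identity $x_1x_3=x_2^2$, and encode everything in the single ratio $r=a'a^{-1}$; this makes the ``in particular'' count (at most two bad values of $a'$) and the equivalence $cb=ad \Leftrightarrow x_1^2=x_2^2$ completely transparent, at the cost of a little more setup. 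The underlying structure is the same; your presentation is more systematic, the paper's more economical.
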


\begin{proof}
	By definition, the two quadruples are not disjoint means that $(\{ a,c \} \cap \{ a',c' \}) \cup( \{ b,d \} \cap \{ b',d' \}) \neq \emptyset$
	
	By \cref{fact:a=a'}, if either $a = a'$, $b = b'$, $c = c'$, or $d = d'$, then $(a',b',c',d') = (a,b,c,d)$. Hence, we may assume that none of these 4 equalities hold.
	
	Note that $a' = c$ if and only if $b' = d$, by $a'b' = ab = cd$. Similarly, $c' = a$ if and only if $d' = b$, by $c'd' = a'b' = ab$. 
	
	It is clear that $cb = ad$ in the first case, and that $cb \neq ad$ in the other 2 cases.
\end{proof}

\section{Proof of \cref{thm:mu} when $t = 2$} 
\label{section:t=2}

We fix a real number $c > 0$. Let $A$ be a finite abelian group with order $n$ and let $S_0$ be a set of triples of the form $(a,b,ab) \in A^3$ with $|S_0| \geq cn^2$.

In this section, we will find a subset $S_1$ of $S_0$, a triple $\vec{y}^{(1)} = (y_1^{(1)}, y_2^{(1)}, y_3^{(1)}) \in A^3$, a constant $c_1 > 0$ depending only on $c$, such that if $n$ is large enough, there are at least $\lfloor \mu_1(c_1) n^3 \rfloor$ $S_1$-good quadruples in $A^4$ and every $S_1$-good quadruple $(a,b,c,d)$ belongs to some $Q_{S_1}(\vec y^{(2)})$, where $\vec y^{(2)} = (y_1^{(2)}, y_2^{(2)}, y_3^{(2)}) \in A^3$ is element-disjoint from $\vec y^{(1)}$, and for some $a_i,b_i,c_i,d_i \in A$ (depending on $a,b,c,d$), the quadruples $R_1 = (a,b_1,c_1,d_1), R_2 = (a_2,b,c_2,d_2), R_3 = (c,b_3,c_3,d_3), R_4 = (a_4,d,c_4,d_4)$ are all in $\tilde Q_{S_0'}(\vec y^{(1)})$, for some $S_0' \subset S_0$. See Figure \ref{fig:(a,b,c,d)}.

\begin{figure}[ht!]
	\centering
	\includegraphics[width=120mm]{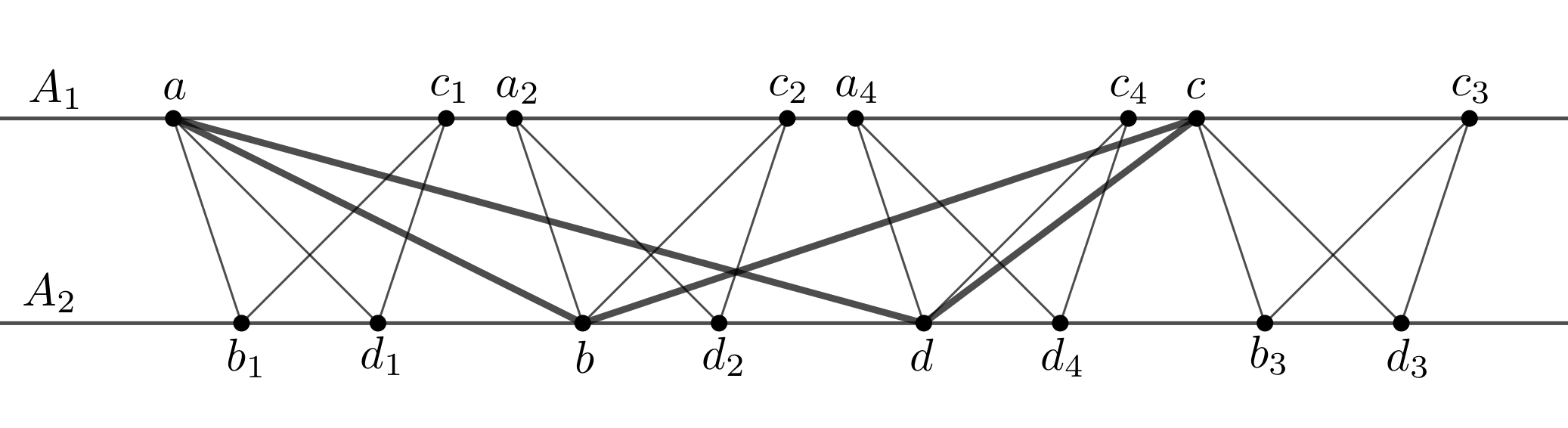}
	\caption{A subgraph of $G_{S_0'}$ corresponding to the 5 good quadruples $(a,b,c,d), R_1, R_2, R_3, R_4$.}
	\label{fig:(a,b,c,d)}
\end{figure}

We note that these 4 quadruples $R_i$ are distinct. Indeed, since $(a,b,c,d)$ is an $S_1$-good quadruple, by definition $a \neq c$, and therefore $b \neq d$, which implies $R_1 \neq R_3$ and $R_2 \neq R_4$. Moreover, if any two other quadruples are the same, say $R_1 = R_2$, then $y_2^{(1)} = ab_1 = ab = y_2^{(2)}$, contradicting the property that $\vec y^{(1)}$ and $\vec y^{(2)}$ are element-disjoint.

Consider the elements $\{a,b,c,d\} \cup \{a_i,b_i,c_i,d_i\} \cup \{y_j^{(1)},y_j^{(2)}\}$. There are $\nu \leq 4 + 12 + 6 = 22 = 4^2 + 3(2)$ elements. They span at least
\[
4+4^2 = \dfrac {4(16)} 3\dfrac{15}{16}\geq \dfrac {4(\nu - 6)}{3} \dfrac{15}{16}
\]
triples of $S_0$, as desired when $t = 2$.

By assumption, the graph $G_{S_0}$ contains at least $cn^2$ edges. We obtain a vertex partition $V_0 \cup V_1 \cup \cdots \cup V_{2k}$ and a subgraph $G_{S_0}'$ of $G_{S_0}$ using the regularity lemma (\cref{thm:regularitylemma}) with $\eps \leq \frac c {24}$ to be determined later (in \cref{lemma:eps}). Note that we can assume that $n \geq n_0(\eps)$ by choosing a smaller $\mu_2(c)$ if necessary at the end. By condition \emph{(iv)}, $G_{S_0}'$ contains at least $\frac c 2 n^2$ edges. One may identify the graph $G_{S_0}'$ with the graph $G_{S_0'}$, where $S_0'$ is a subset of $S_0$ with $|S_0'| \geq |S_0| / 2 \geq \frac{c}{2}n^2$.

\begin{lemma}
	\label{lemma:largestqvalue}
	There is a triple $\vec{y}^{(1)} \in A^3$ such that
	\[
	\tilde{q}_{S_0'}(\vec{y}^{(1)}) > \dfrac{\lfloor \mu_1(c/2) n \rfloor}{3}.
	\]
\end{lemma}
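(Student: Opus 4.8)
The plan is to combine the quantitative count from \cref{thm:Solymosi15c} with the structural bound of \cref{fact:q>=1} through a simple averaging argument. Since $|S_0'| \geq \frac{c}{2} n^2$, I would first apply \cref{thm:Solymosi15c} to the set $S_0'$ with the constant $c/2$ in place of $c$. This yields at least $\lfloor \mu_1(c/2) n^3 \rfloor$ $S_0'$-good quadruples in $A^4$.

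Next, I would observe that every $S_0'$-good quadruple $(a,b,c,d)$ belongs to exactly one set $Q_{S_0'}(\vec x)$, namely the one with $\vec x = (cb, ab, ad)$. Hence the total number of $S_0'$-good quadruples equals $\sum_{\vec x} q_{S_0'}(\vec x)$, a sum over the triples $\vec x$ with $q_{S_0'}(\vec x) \geq 1$. By \cref{fact:q>=1} there are at most $n(n-1) < n^2$ such triples, so by pigeonhole some triple $\vec y^{(1)}$ satisfies
\[
q_{S_0'}(\vec y^{(1)}) \geq \frac{\lfloor \mu_1(c/2) n^3 \rfloor}{n(n-1)}.
\]

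The remaining step is the floor-function bookkeeping that turns this into the stated strict bound. Writing $\mu_1 = \mu_1(c/2)$ for brevity, note that $\mu_1 n^3 = (\mu_1 n) n^2 \geq \lfloor \mu_1 n \rfloor n^2$, and since the right-hand side is an integer not exceeding $\mu_1 n^3$, we get $\lfloor \mu_1 n^3 \rfloor \geq \lfloor \mu_1 n \rfloor n^2$. Dividing by $n(n-1)$ then gives
\[
q_{S_0'}(\vec y^{(1)}) \geq \lfloor \mu_1 n \rfloor \cdot \frac{n}{n-1} > \lfloor \mu_1 n \rfloor,
\]
where the strict inequality uses $\frac{n}{n-1} > 1$ together with $\lfloor \mu_1 n \rfloor \geq 1$, which holds once $n$ is large enough. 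As $q_{S_0'}$ is integer-valued, this forces $q_{S_0'}(\vec y^{(1)}) \geq \lfloor \mu_1 n \rfloor + 1$. Finally, by the defining property of $\tilde q_{S_0'}$ we have $\tilde q_{S_0'}(\vec y^{(1)}) \geq q_{S_0'}(\vec y^{(1)})/3 > \lfloor \mu_1(c/2) n \rfloor / 3$, as required.

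I do not anticipate a genuine obstacle: once \cref{thm:Solymosi15c} and \cref{fact:q>=1} are in place, the argument is a routine pigeonhole estimate. The only point demanding care is keeping the inequalities strict through the floor functions, so that the conclusion is the strict inequality claimed in the statement rather than merely a non-strict one; this is handled by the extra factor $\frac{n}{n-1}$ arising from the sharper count $n(n-1)$ of nonzero triples.
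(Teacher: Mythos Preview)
Your argument is correct and follows essentially the same route as the paper: apply \cref{thm:Solymosi15c} to $S_0'$, average using the bound of \cref{fact:q>=1} to find $\vec y^{(1)}$ with $q_{S_0'}(\vec y^{(1)}) > \lfloor \mu_1(c/2) n \rfloor$, and then invoke $\tilde q \geq q/3$. Your floor-function bookkeeping is a bit more explicit than the paper's, but the substance is identical.
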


\begin{proof}
	We apply \cref{thm:Solymosi15c} with $S = S_0'$, we have $\lfloor \mu_1(c/2) n^3 \rfloor$ $S_0'$-good quadruples in $A^4$, i.e.
	\[
	\sum_{\substack{\vec x \in A^3 \\ q_{S_0'}(\vec x) \geq 1}} q_{S_0'}(\vec x) \geq \lfloor \mu_1(c/2) n^3 \rfloor.
	\]
	Let $\vec y^{(1)} \in A^3$ be a triple having the largest $q_{S_0'}$-value. By \cref{fact:q>=1}, we have
	\[
	q_{S_0'}(\vec y^{(1)}) > \dfrac{\lfloor \mu_1(c/2) n^3 \rfloor}{n^2} \geq \lfloor \mu_1(c/2) n \rfloor,
	\]
	and by \cref{fact:share},
	\[
	\tilde q_{S_0'}(\vec y^{(1)}) > \dfrac{\lfloor \mu_1(c/2) n \rfloor}{3}.
	\]
\end{proof}

We consider the vertices incident to the edges $(a,b) \in A_1 \times A_2$, where $(a,b,c,d) \in \tilde{Q}_{S_0'}(\vec{y}^{(1)})$, in the graph $G_{S_0'}$. We will show that there are at least $c_1n^2$ edges between these vertices in $G_{S_0'}$, for some $c_1 > 0$ depending on $c$. This allows us to apply \cref{thm:Solymosi15c} again to get many $(7,4)$-configurations using only these vertices.

\begin{figure}[ht!]
	\centering
	\includegraphics[width=120mm]{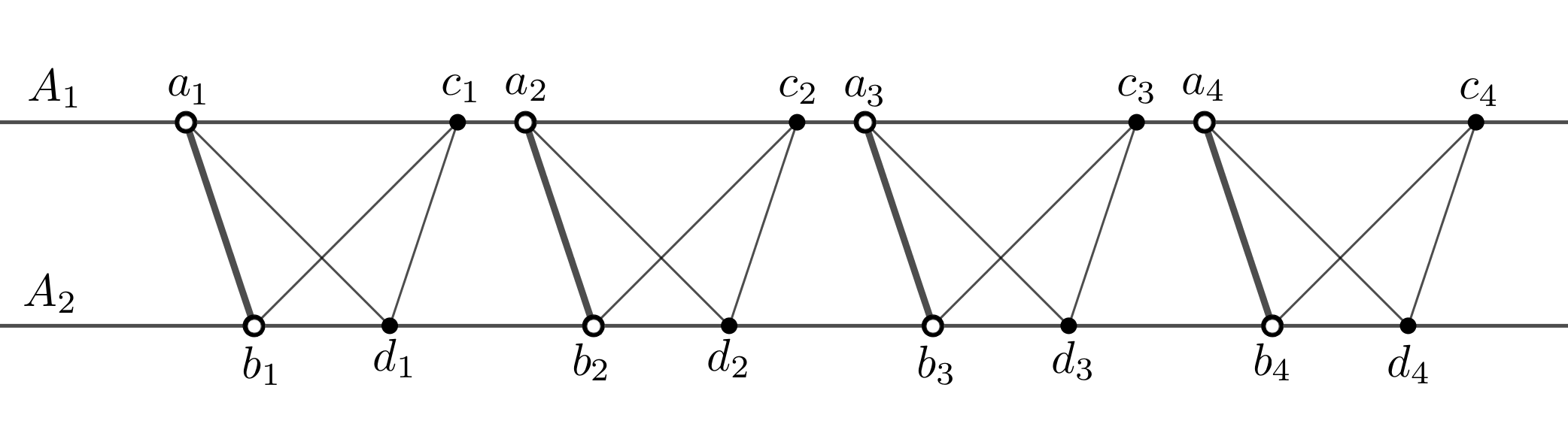}
	\caption{The thicker edges are in $\cE$ and the vertices represented by empty circles are in $\cV$.}
	\label{fig:E,V}
\end{figure}

We note that the size of the set
\[
\cE = \cE(S_0',\vec y^{(1)}) := \{ (a,b) \in  A_1 \times A_2 : (a,b,c,d) \in \tilde{Q}_{S_0'}(\vec{y}^{(1)})\} \subset E(G_{S_0'})
\]
is $\tilde q_{S_0'}(\vec y^{(1)})$. Denote by $\cV(S_0',\vec y^{(1)}) \subset V(G_{S_0'})$ the set of vertices from $\cE(S_0',\vec y^{(1)})$, i.e.
\[
\cV = \cV(S_0',\vec y^{(1)}) := \{a \in A_1 : (a,b) \in \cE(S_0',\vec y^{(1)}) \} \cup \{b \in A_2 : (a,b) \in \cE(S_0',\vec y^{(1)}) \}.
\]
See Figure \ref{fig:E,V}.

\begin{lemma}
	\label{lemma:epsregular}
	If $\eps = \min (\frac {\lfloor \mu_1(c/2) \rfloor} 6, \frac c {24})$, then there exist indices $1 \leq i,j \leq k$ such that $|V_i \cap \cV| \geq \eps |V_i|$, $|V_{k+j} \cap \cV| \geq \eps |V_{k+j}|$, and there is an edge in $\cE$ between $V_i$ and $V_{k+j}$.
\end{lemma}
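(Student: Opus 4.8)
The plan is to use the fact that $\cE$ is a \emph{matching}, so that a vertex count over the parts of the regularity partition translates directly into an edge count, and then to throw away the few edges of $\cE$ that land in a sparse part; at least one edge will survive, and it witnesses the required indices.

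First I would record the matching property. The quadruples in $\tilde{Q}_{S_0'}(\vec y^{(1)})$ are pairwise disjoint, so for two distinct such quadruples $(a,b,c,d)$ and $(a',b',c',d')$ we have $a \neq a'$ and $b \neq b'$ (since $\{a,c\}\cap\{a',c'\}=\{b,d\}\cap\{b',d'\}=\emptyset$). Hence the edges of $\cE$ have pairwise distinct endpoints in $A_1$ and pairwise distinct endpoints in $A_2$, i.e.\ $\cE$ is a matching and $|\cV \cap A_1| = |\cV \cap A_2| = |\cE| = \tilde{q}_{S_0'}(\vec y^{(1)})$. Moreover, since we identify $G_{S_0'}$ with the regularized subgraph $G_{S_0}'$, which by condition \emph{(v)} of \cref{thm:regularitylemma} carries no edge incident to $V_0$, every edge of $\cE$ joins some $V_i$ with $1 \leq i \leq k$ to some $V_{k+j}$ with $1 \leq j \leq k$.

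Next I would bound the number of ``bad'' edges. Call an index $1 \leq i \leq k$ \emph{sparse} if $|V_i \cap \cV| < \eps |V_i|$, and likewise for $V_{k+j}$; call an edge of $\cE$ bad if its $A_1$-endpoint lies in a sparse $V_i$ or its $A_2$-endpoint lies in a sparse $V_{k+j}$. Because $\cE$ is a matching, the number of edges whose $A_1$-endpoint lies in a sparse part equals the number of $\cV$-vertices lying in those parts, which is at most $\sum_{i \text{ sparse}} |V_i \cap \cV| < \eps \sum_{i=1}^{k} |V_i| \leq \eps n$, using $\bigcup_{i=1}^k V_i \subset A_1$. The identical bound holds on the $A_2$-side, so at most $2\eps n$ edges of $\cE$ are bad.

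Finally I would combine the two estimates. By \cref{lemma:largestqvalue} we have $|\cE| = \tilde{q}_{S_0'}(\vec y^{(1)}) > \tfrac{1}{3}\lfloor \mu_1(c/2)\, n \rfloor$, and the choice of $\eps$ in the hypothesis forces $2\eps n < \tfrac{1}{3}\lfloor \mu_1(c/2)\, n \rfloor$ once $n$ is large. Hence the number of good (non-bad) edges of $\cE$ is at least $|\cE| - 2\eps n > 0$, so there is at least one good edge $(a,b) \in \cE$; its $A_1$-endpoint lies in a non-sparse $V_i$ (so $|V_i\cap\cV|\geq\eps|V_i|$), its $A_2$-endpoint lies in a non-sparse $V_{k+j}$ (so $|V_{k+j}\cap\cV|\geq\eps|V_{k+j}|$), and $(a,b)$ is an edge of $\cE$ between $V_i$ and $V_{k+j}$, exactly as required. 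The one point that needs care is the matching property: it is precisely what converts the vertex-mass bound $\eps n$ into an edge bound, and without the pairwise disjointness guaranteed by the construction of $\tilde{Q}_{S_0'}$ a single sparse part could absorb many edges and destroy the count. The floor/rounding bookkeeping and the requirement $n \geq n_0(\eps)$ are routine and can be absorbed into the final choice of $\mu_2(c)$.
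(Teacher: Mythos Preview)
Your proof is correct and follows essentially the same approach as the paper: define ``bad'' edges of $\cE$ as those with an endpoint in a sparse part $V_i$, bound their number by $2\eps n$ using $|V_i|\le n/k$ (or, as you do, $\sum_i |V_i|\le n$), and compare with $|\cE|>\tfrac13\lfloor\mu_1(c/2)\,n\rfloor$ to exhibit a surviving edge. The only difference is expository: you make the matching property of $\cE$ (coming from the pairwise disjointness of $\tilde Q_{S_0'}(\vec y^{(1)})$) explicit, whereas the paper uses it silently when asserting $|\cE_i|<\eps|V_i|$.
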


\begin{proof}
	\label{lemma:eps}
	For each $1 \leq i \leq 2k$, we put 
	\[
	\cE_i = \begin{cases}
	\emptyset, &\mbox{if $|V_i \cap \cV| \geq \eps |V_i|$} \\
	\{(a,b) \in \cE : a \in V_i \cap \cV\}, &\mbox{if $|V_i \cap \cV| < \eps |V_i|$ and $1 \leq i \leq k$} \\
	\{(a,b) \in \cE : b \in V_i \cap \cV\}, &\mbox{if $|V_i \cap \cV| < \eps |V_i|$ and $k+1 \leq i \leq 2k$}
	\end{cases} \subset \cE.
	\]
	
	It suffices to show that $\cE \backslash (\cup_{i=1}^{2k} \cE_i)$ is non-empty. Recall from condition \emph{(iii)} that $|V_i| \leq n/k$. Since $|\cE_i| < \eps |V_i| \leq \eps n / k$ and $\eps \leq \lfloor \mu_1(c/2) \rfloor/6$, we have
	\[
	\left| \cE \bigg\backslash \bigcup_{i=1}^{2k} \cE_i \right| \geq |\cE| - \sum_{i=1}^{2k} |\cE_i| > \tilde q_{S_0'}(\vec y^{(1)}) - 2\eps n > \dfrac{\lfloor \mu_1(c/2) n \rfloor}{3} - \dfrac{\lfloor \mu_1(c/2) \rfloor n}{3} = 0.
	\]
\end{proof}

By condition \emph{(vi)} and \emph{(i)}, the number of edges between $V_i \cap \cV$ and $V_{k+j} \cap \cV$ in $G_{S_0'}$ is at least
\[
\eps |V_i \cap \cV| |V_{k+j} \cap \cV| \geq \eps^3 |V_i||V_{k+j}| \geq \dfrac{\eps^3(1-\eps)^2n^2}{k^2} \geq \dfrac{4\eps^3(1-\eps)^2}{K^2} n^2.
\]

Remove from these edges the set of edges $(\alpha,\beta)$ so that $\alpha\beta \in \{y_1^{(1)},y_2^{(1)},y_3^{(1)}\}$, where $\vec y^{(1)} = (y_1^{(1)}, y_2^{(1)}, y_3^{(1)})$. The number of edges removed is at most $3|V_i| \leq 3n/k \leq 6 \eps n$, since $A$ is a group. Therefore, the number of remaining edges is at least 
\[
\dfrac{4\eps^3(1-\eps)^2}{K^2} n^2 - 6 \eps n \geq \dfrac{\eps^3(1-\eps)^2}{K^2} n^2 =: c_1n^2,
\] 
where $c_1 > 0$ depends only on $c$, assuming $n \geq 2K^2 / (\eps(1-\eps))^2$.

The subgraph of $G_{S_0'}$ containing these edges can be written as $G_{S_1}$, for some $S_1 \subset S_0'$ with $|S_1| \geq c_1n^2$. Apply \cref{thm:Solymosi15c} with $S = S_1$, we get $\lfloor \mu_1(c_1) n^3 \rfloor$ $S_1$-good quadruples in $A^4$.

Let $(a,b,c,d) \in Q_{S_1}(\vec y^{(2)})$ be one of these $S_1$-good quadruples. Since the graph $G_{S_1}$ does not contain an edge $(\alpha,\beta)$ with $\alpha\beta \in \{y_1^{(1)},y_2^{(1)},y_3^{(1)}\}$, the triples $\vec y^{(1)}$ and $\vec y^{(2)}$ are element-disjoint. Moreover, since $a,c \in V_i \cap \cV \subset \cV \cap A_1$, there are $b_1,b_3 \in A_2$ such that $(a,b_1),(c,b_3) \in \cE$, and so we have $(a,b_1,c_1,d_1),(c,b_3,c_3,d_3) \in \tilde{Q}_{S_0'}(\vec y^{(1)})$. Similarly, since $b,d \in V_{k+j} \cap \cV$, we have $(a_2,b,c_2,d_2),(a_4,d,c_4,d_4) \in \tilde{Q}_{S_0'}(\vec y^{(1)})$. The claim we made in the beginning of this section is now established.

Recall that we imposed 2 assumptions on $n$, namely $n \geq n_0(\eps)$ and $n \geq 2K^2 / (\eps(1-\eps))^2$. Hence, we put
\[
\mu_2(c) = \min \left( \mu_1(c_1), \left(\dfrac{1}{n_0(\eps)}\right)^3, \left( \dfrac{\eps^2(1-\eps)^2}{2K^2} \right)^3 \right).
\]

\section{Proof of \cref{thm:mu} when $t \geq 3$}
To prove \cref{thm:mu} for larger $t$, we repeat what we did in the last section $t-2$ times. We highlight the key points when $t = 3$, assuming $n$ is large enough.

After we get $S_1$, instead of applying \cref{thm:Solymosi15c} immediately, we first apply the regularity lemma to $G_{S_1}$ with $\eps = \min (\frac {\lfloor \mu_1(c_1/2) \rfloor} 6, \frac {c_1} {24})$ to get $S_1' \subset S_1$ with $|S_1'| \geq \frac{c_1}{2}n^2$. By \cref{lemma:largestqvalue}, we get a triple $\vec y^{(2)} \in A^3$ such that 
\[
\tilde{q}_{S_1'}(\vec{y}^{(2)}) > \dfrac{\lfloor \mu_1(c_1/2) n \rfloor}{3}.
\]
Define the sets $\cE(S_1',\vec y^{(2)})$ and $\cV(S_1',\vec y^{(2)})$ accordingly. In the same spirit, \cref{lemma:epsregular} allows us to find a subset $S_2 \subset S_1'$ with $|S_2| \geq c_2n^2$, where $c_2 > 0$ depends on $c$. Note that the graph $G_{S_2}$ does not contain an edge $(\alpha,\beta)$ with $\alpha \beta \in \{y_1^{(2)},y_2^{(2)},y_3^{(2)}\}$. Lastly, we apply \cref{thm:Solymosi15c} and get $\lfloor \mu_1(c_2) n^3 \rfloor$ $S_2$-good quadruples in $A^4$.

In general, we get subsets $S_{t-1} \subset S_{t-2}' \subset S_{t-2} \subset S_{t-3}' \subset \cdots \subset S_0' \subset S_0$, with $|S_i'| \geq \frac{c_i}{2} n^2$ and $|S_i| \geq c_i n^2$, where $c_i > 0$ depends on $c$.  

It remains to show that each $S_{t-1}$-good quadruple $(a,b,c,d) \in A^4$ gives us a set of $\nu$ elements, with $2^{t+1} \leq \nu \leq 4^t + 3t$, which spans at least
\[
\dfrac{4(\nu-3t)}{3} \left(1 - \dfrac{1}{4^t} \right)
\]
triples of $S_0$.

For $1 \leq i \leq t$, let $\cL_i$ be the set of good quadruples in the $i$-th \emph{layer}. More precisely, we define
\[
\cL_1 := \{(a,b,c,d)\} \subset Q_{S_{t-1}}(\vec y^{(t)}),
\]
and if we have $\cL_i = \{(a_j,b_j,c_j,d_j) : 1 \leq j \leq |\cL_i| \} \subset Q_{S_{t-i}'}(\vec y^{(t-i+1)})$, one can define $\cL_{i+1} \subset Q_{S_{t-i-1}'}(\vec y^{(t-i)})$ as follows. Since $a_j,c_j \in \cV(S_{t-i-1}',\vec y^{(t-i)}) \cap A_1$, we have $(a_j,\beta_{j,1},\gamma_{j,1},\delta_{j,1}), (c_j,\beta_{j,3},\gamma_{j,3},\delta_{j,3}) \in Q_{S_{t-i-1}'}(\vec y^{(t-i)})$. Similarly, since $b_j,d_j \in \cV(S_{t-i-1}',\vec y^{(t-i)}) \cap A_2$, we have $(\alpha_{j,2},b_j,\gamma_{j,2},\delta_{j,2}), (\alpha_{j,4},d_j,\gamma_{j,4},\delta_{j,4}) \in Q_{S_{t-i-1}'}(\vec y^{(t-i)})$. Put
\[
\begin{split}
\cL_{i+1} = \{(a_j,\beta_{j,1},&\gamma_{j,1},\delta_{j,1}), (c_j,\beta_{j,3},\gamma_{j,3},\delta_{j,3}),(\alpha_{j,2},b_j,\gamma_{j,2},\delta_{j,2}), \\
&(\alpha_{j,4},d_j,\gamma_{j,4},\delta_{j,4}): 1 \leq j \leq |\cL_i|\} \subset Q_{S_{t-i-1}'}(\vec y^{(t-i)})
\end{split}
\]

 For $2 \leq i \leq t$, the set $\cL_i \subset \tilde Q_{S_{t-i}'}(\vec y^{(t-i+1)})$ consists of $|\cL_i| \leq 4|\cL_{i-1}|$ $S_{t-i}'$-good quadruples. Note also that the set of elements of $A$ appear in $\cL_t$ contains all the elements appear in other layers.

\begin{lemma}
	\label{lemma:L_i}
	For $2 \leq i \leq t$, the number of distinct elements of $A$ appear in $\cL_i$ is at least $2^{i+1}$ and at most $4^i$.
\end{lemma}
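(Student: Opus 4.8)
The plan is to control the cardinality $|\cL_i|$ and then read off the element count from the pairwise-disjointness of the layers. First I would record the upper bound. Since $|\cL_1| = 1$ and $|\cL_{i+1}| \leq 4|\cL_i|$, we get $|\cL_i| \leq 4^{i-1}$; as each quadruple contributes exactly four coordinate entries, the number of distinct elements appearing in $\cL_i$ is at most $4|\cL_i| \leq 4^i$. (Note that the products of the members of $\cL_i$ are all equal to components of $\vec y^{(t-i+1)}$ and so are accounted for separately, not among the elements counted here.)

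For the lower bound I would prove the sharper claim that $|\cL_i| \geq 2^i$ for $2 \leq i \leq t$, from which the desired bound follows immediately: for $i \geq 2$ the layer $\cL_i$ sits inside the pairwise-disjoint family $\tilde{Q}_{S_{t-i}'}(\vec{y}^{(t-i+1)})$, so the first-and-third coordinates of its quadruples are $2|\cL_i|$ distinct elements of $A$ (distinct across quadruples by disjointness, and distinct within each quadruple since a good quadruple has $a \neq c$). Hence the number of distinct elements is at least $2|\cL_i| \geq 2^{i+1}$.

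To prove $|\cL_i| \geq 2^i$ I would combine a doubling step with a base case. For the step, fix $i \geq 2$ and write $\cL_i = \{(a_j,b_j,c_j,d_j)\}$. Among the four children produced from the $j$-th quadruple I would keep only the two anchored on the $A_1$-side, namely the members of $Q_{S_{t-i-1}'}(\vec{y}^{(t-i)})$ with first coordinates $a_j$ and $c_j$. By \cref{fact:a=a'} a quadruple in this set is determined by its first coordinate, and the first coordinates $\{a_j, c_j : j\}$ are $2|\cL_i|$ distinct elements (disjointness of $\cL_i$ again), so these children are $2|\cL_i|$ distinct quadruples, all lying in $\cL_{i+1}$; thus $|\cL_{i+1}| \geq 2|\cL_i|$. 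For the base case $|\cL_2| = 4$ I would reuse the argument of \cref{section:t=2}: the four children $P_{1,1}, P_{1,2}, P_{1,3}, P_{1,4}$ of the single quadruple $(a,b,c,d)$ are pairwise distinct because $a \neq c$ and $b \neq d$ separate the two same-side pairs, while any equality across the two sides would force a common product value shared between $\vec{y}^{(t)}$ and $\vec{y}^{(t-1)}$ (for instance $P_{1,1} = P_{1,2}$ yields $y_2^{(t-1)} = ab = y_2^{(t)}$), contradicting their element-disjointness. Iterating the doubling from $|\cL_2| = 4 = 2^2$ then gives $|\cL_i| \geq 2^i$.

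The main obstacle is the base case rather than the step. The doubling step is essentially free once one observes that the $A_1$-anchored children alone already inject $2|\cL_i|$ distinct quadruples into $\cL_{i+1}$; but obtaining the extra factor that upgrades the naive bound $2^i$ on the element count to the required $2^{i+1}$ depends on the genuine fourfold splitting at the first layer, which in turn rests entirely on the element-disjointness of the consecutive triples $\vec{y}^{(t)}$ and $\vec{y}^{(t-1)}$ guaranteed by the edge-removal in the construction. I would therefore take care to invoke exactly this element-disjointness together with \cref{fact:a=a'} (and its evident second-coordinate analogue, obtained from $a'b' = ab$) to rule out the cross-side coincidences.
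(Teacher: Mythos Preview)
Your proposal is correct and follows essentially the same route as the paper: bound $|\cL_i|\le 4^{i-1}$ for the upper bound, use pairwise disjointness of $\cL_i\subset\tilde Q_{S_{t-i}'}(\vec y^{(t-i+1)})$ to get $2|\cL_i|$ distinct first-and-third coordinates, establish the base case $|\cL_2|=4$ via the element-disjointness of $\vec y^{(t)}$ and $\vec y^{(t-1)}$, and prove the doubling $|\cL_{i+1}|\ge 2|\cL_i|$ by keeping the $A_1$-anchored children and noting their first coordinates are distinct. Your explicit invocation of \cref{fact:a=a'} makes the doubling step slightly more transparent than the paper's wording, but the argument is the same.
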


\begin{proof}
	Since we have
	\[
	|\cL_i| \leq 4|\cL_{i-1}| \leq \cdots \leq 4^{i-1}|\cL_1| = 4^{i-1},
	\]
	and since each quadruple contains at most 4 elements of $A$, the upper bound follows.
	
	Since the quadruples in $\cL_i$ are pairwise disjoint, the elements
	\[
	\{a : (a,b,c,d) \in \cL_i \} \cup \{c : (a,b,c,d) \in \cL_i \}
	\]
	are all distinct. Hence, the number of distinct elements of $A$ appear in $\cL_i$ is at least $2|\cL_i|$. Recall that we showed in \cref{section:t=2} that $|\cL_2| = 4$. To prove the lower bound, it suffices to show that $|\cL_{i+1}| \geq 2|\cL_i|$, for all $i \geq 2$.
	
	To this end, we recall from above that the set
	\[
	\{(a_j,\beta_{j,1},\gamma_{j,1},\delta_{j,1}) : 1 \leq j \leq |\cL_i|\} \cup \{(c_j,\beta_{j,3},\gamma_{j,3},\delta_{j,3}): 1 \leq j \leq |\cL_i|\} 
	\]
	is a subset of $\cL_{i+1}$. As we noted that the elements $\{a_j,c_j : 1 \leq j \leq |\cL_i|\}$ are all distinct, no two quadruples above can be the same. This proves the lower bound.
\end{proof}

Now, we consider the elements of $A$ appear in $\cL_t$ and all the triples $\vec y^{(j)}$, where $1 \leq j \leq t$. The total number $\nu$ of elements satisfies $2^{t+1} \leq \nu \leq 4|\cL_t| + 3t \leq 4^t + 3t$. The number of triples of $S_0$ spanned by these $\nu$ elements is at least
\[
\begin{split}
4(|\cL_1| + \cdots + |\cL_{t-1}| + |\cL_t|)
& \geq 4|\cL_t| \left(\dfrac{1}{4^{t-1}} + \cdots + \dfrac14 + 1\right) \\
& = \dfrac{4|\cL_t|}{3} \dfrac{4^t-1}{4^{t-1}} \\
& \geq \dfrac{4(\nu - 3t)}{3} \left( 1-\dfrac{1}{4^t} \right),
\end{split}
\]
as desired.

\section{Acknowledgements}
Both authors were supported in part by NSERC. The work of the first author was supported by the European Research Council
(ERC) under the European Union's Horizon 2020 research and innovation programme (grant agreement No. 741420, 617747, 648017).
The first author was also supported by OTKA K 119528.

\end{document}